\begin{document}




\title[Almost Ricci-like solitons with torse-forming vertical potential \ldots]
{Almost Ricci-like solitons with torse-forming vertical potential 
of constant length
on almost contact B-metric manifolds}


\author{Mancho Manev}

\address{Department of Algebra and Geometry, University of Plovdiv Paisii Hilendarski,
							24 Tzar Asen St., Plovdiv, 4000, Bulgaria 
\&
Department of Medical Informatics, Biostatistics and E-Learning,
					 Medical University of Plovdiv, 15A Vasil Aprilov Blvd., Plovdiv, 4002, Bulgaria
}
\email{mmanev@uni-plovdiv.bg}

\begin{abstract}
A generalization of Ricci-like solitons with torse-forming potential,
which is constant multiple of the Reeb vector field, is studied.
The conditions under which these solitons are equivalent to almost Einstein-like metrics are given.
Some results are obtained for a parallel symmetric second-order covariant tensor.
Finally, an explicit example of an arbitrary dimension
is given and some of the results are illustrated.
\end{abstract}

%

\keywords{Almost Ricci-like soliton,
almost $\eta$-Ricci soliton,
almost Einstein-like manifold,
almost $\eta$-Einstein manifold,
almost contact B-metric manifold,
torse-forming vector field}



\subjclass[2010]{
53C25, 
53C44, 
70G45} 


\newcommand{\hll}[1]{\colorbox{yellow}{$\displaystyle #1$}}
\newcommand{\ie}{i.\hspace{.5pt}e.\ }
\newcommand{\f}{\varphi}
\newcommand{\g}{\tilde{g}}
\newcommand{\n}{\nabla}
\newcommand{\nn}{\tilde{\n}}
\newcommand{\M}{(M,\A\f,\A\xi,\A\eta,\A{}g)}
\newcommand{\G}{\mathcal{G}}
\newcommand{\I}{\iota}
\newcommand{\W}{\mathcal{W}}
\newcommand{\R}{\mathbb R}
\newcommand{\C}{\mathbb C}
\newcommand{\X}{\mathfrak X}
\newcommand{\F}{\mathcal{F}}
\newcommand{\LL}{\mathcal{L}}
\newcommand{\ta}{\theta}
\newcommand{\om}{\omega}
\newcommand{\lm}{\lambda}
\newcommand{\gm}{\gamma}
\newcommand{\al}{\alpha}
\newcommand{\bt}{\beta}
\newcommand{\sx}{\mathop{\mathfrak{S}}\limits_{x,y,z}}
\newcommand{\D}{\mathrm{d}\hspace{-0.1pt}}
\newcommand{\dsfrac}{\displaystyle\frac}
\newcommand{\const}{\mathrm{const}}

\newcommand{\Div}{\mathrm{div}} 
\newcommand{\grad}{\mathrm{grad}} 
\newcommand{\Hess}{\mathrm{Hess}} 
\newcommand{\tr}{\mathrm{tr}} 
\newcommand{\Span}{\mathrm{span}} 

\newcommand{\abs}[1]{\vert #1 \vert}
\newcommand{\norm}[1]{\left\Vert#1\right\Vert}
\newcommand{\normq}[1]{\lVert#1\rVert ^2}

\newcommand{\A}{\allowbreak{}}

\newtheorem{theorem}{Theorem}[section]
\newtheorem{lemma}[theorem]{Lemma}
\newtheorem{proposition}[theorem]{Proposition}
\newtheorem{corollary}[theorem]{Corollary}

\theoremstyle{definition}
\newtheorem{definition}[theorem]{Definition}
\newtheorem{example}[theorem]{Example}
\newtheorem{xca}[theorem]{Exercise}

\theoremstyle{remark}
\newtheorem{remark}[theorem]{Remark}

\numberwithin{equation}{section}

\newcommand{\thmref}[1]{Theorem~\ref{#1}}
\newcommand{\lemref}[1]{Lemma~\ref{#1}}
\newcommand{\cororef}[1]{Corollary~\ref{#1}}
\newcommand{\propref}[1]{Proposition~\ref{#1}}
\newcommand{\remref}[1]{Remark~\ref{#1}}
\newcommand{\defref}[1]{Definition~\ref{#1}}

\maketitle




\section{Introduction}

In 1982, R.\,S. Hamilton \cite{Ham82} introduced the concept of Ricci flow for Riemannian manifolds.
Ricci solitons are introduced as self-similar solutions of Ricci flows.
Ricci solitons are also natural extensions of Einstein metrics.
Although Ricci solitons were first studied in Riemannian geometry, they and their generalizations have recently been intensively studied for pseudo-Riemannian metrics, mainly on Lorentzian and paracontact metric manifolds
(e.g. \cite{Bla15}, \cite{BlaCra}, \cite{BlaOzg}, \cite{BlaPerAceErd}, \cite{Cra09}, \cite{Mat}, \cite{Ond10}, \cite{YadOzt}, \cite{YadChaSut}).

Torse-forming vector fields are defined by a certain recurrent condition for their covariant derivative with respect to the Levi-Civita connection of the basic metric.
These vector fields are first defined and studied by K. Yano \cite{Yano44}. They are then studied by various
authors (e.g.\ \cite{Chen17}, \cite{MihMih13}, \cite{MihRosVer}). Their research has recently been expanded to include mainly $\eta$-Ricci solitons with torse-forming potential that is orthogonal to $\ker(\eta)$ and the solitons are compatible with various additional tensor structures
(e.g.\ \cite{BlaCra}, \cite{BlaOzg}, \cite{BlaPerAceErd}, \cite{YadOzt}, \cite{YadChaSut}).

In \cite{Man62}, the author begins the study of a generalization of Ricci solitons and $\eta$-Ricci solitons on almost contact B-metric manifolds called Ricci-like solitons, whose potential is the Reeb vector field $\xi$ in two cases: when $\xi$ is torse-forming or the manifold is Sasaki-like.
In \cite{Man63}, he continues to study Sasaki-like manifolds admitting Ricci-like solitons with potential that is pointwise collinear with the Reeb vector field.

The only basic class of almost contact B-metric manifolds that allows a torse-forming Reeb vector field is $\F_5$
\cite{Man62} denoted according to the Ganchev-Mi\-ho\-va-Gribachev classification \cite{GaMiGr}. This class is considered in most of the present work and $\F_5$ is the counterpart of the class of $\bt$-Kenmotsu manifolds among the classes of almost contact metric manifolds.
Ricci solitons and $\eta$-Ricci solitons on manifolds with almost contact or almost paracontact structures, both of Kenmotsu type, have been studied in a number of works
(e.g.\ \cite{AyaYil}, \cite{BagIngAsh13}, \cite{Bla15}, \cite{CalCra}, \cite{YadChaSut}).

%


It is known from \cite{Levy} the theorem of H. Levy, which states that a second-order symmetric parallel non-singular tensor on a space of constant curvature is a constant multiple of the metric tensor.
In \cite{Sha89}, R. Sharma gives a generalization of Levy's theorem
in a non-flat real space form of dimension greater than two.
Moreover, he proved that the only symmetric (resp., anti-symmetric) tensor of this type 
in a non-flat complex space form is the K\"ahlerian metric (resp., 2-form) up to a constant multiple.
Similar investigations on other type of manifolds are made in
\cite{BejCra14}, \cite{BlaPerAceErd}, \cite{Das07}, \cite{De96}, \cite{Li97}.

In the present paper, our aim is to generalize and develop our study of Ricci-like solitons on almost contact B-metric manifolds by studying a potential that is torse-forming with constant length and vertical direction---in the sense that
it is orthogonal to the contact (horizontal) distribution $\ker(\eta)$ with respect to the basic metric. 
The paper is organized as follows.
After the present introduction, in Section~2, we recall basic definitions and facts for almost contact B-metric manifolds and torse-forming vector fields, and then we find immediate properties of these manifolds that have a torse-forming vertical vector field with constant length.
In Section~3, we prove necessary and sufficient conditions for the studied manifolds to admit almost Ricci-like solitons and to be almost Einstein-like manifolds.
In Section~4, we give a characterization for almost Ricci-like solitons on the studied manifold
concerning a parallel symmetric $(0,2)$-tensor.
In Section~5, we construct an explicit example of a smooth manifold of dimension $(2n+1)$ equipped with an almost contact B-metric structure. Then, we show that the manifold of the considered type
and
the results for this example support the relevant assertions in the previous sections.

\section{Torse-forming constant-length vertical vector fields on almost contact B-metric manifolds}

The subject of our study are \emph{almost contact B-metric manifolds}. A differentiable manifold $M$ of this type has dimension $(2n+1)$ and it is denoted by $\M$, where  $(\f,\xi,\eta)$ is an almost
contact structure and $g$ is a B-metric. More precisely, $\f$ is an endomorphism
on $\Gamma(TM)$, $\xi$ is a Reeb vector field, $\eta$ is its dual contact 1-form and
$g$ is a pseu\-do-Rie\-mannian
metric  of signature $(n+1,n)$ satisfying the following conditions \cite{GaMiGr}
\begin{equation}\label{strM}
\begin{array}{c}
\f\xi = 0,\qquad \f^2 = -\I + \eta \otimes \xi,\qquad
\eta\circ\f=0,\qquad \eta(\xi)=1,\\[4pt]
g(\f x, \f y) = - g(x,y) + \eta(x)\eta(y),
\end{array}
\end{equation}
where $\I
$ is the identity transformation on $\Gamma(TM)$.

In the latter equality and further, $x$, $y$, $z$, $w$ will stand for arbitrary elements of $\Gamma(TM)$ or vectors in the tangent space $T_pM$ of $M$ at an arbitrary
point $p$ in $M$.

Some immediate consequences of \eqref{strM} are the following equations
\begin{equation}\label{strM2}
\begin{array}{ll}
g(\f x, y) = g(x,\f y),\qquad &g(x, \xi) = \eta(x),
\\[4pt]
g(\xi, \xi) = 1,\qquad &\eta(\n_x \xi) = 0,
\end{array}
\end{equation}
where $\n$ is the Levi-Civita connection of $g$.

An important characteristic of these manifolds is the existence of one more B-metric together with $g$.
The associated metric $\g$ of $g$ on $(M,\f,\xi,\eta)$ is defined by
\[ 
\g(x,y)=g(x,\f y)+\eta(x)\eta(y)
\]
and it is easy to verify that $\g$ is also a B-metric on $(M,\f,\xi,\eta)$.

A classification of almost contact B-metric manifolds, consisting of eleven basic classes $\F_i$, $i\in\{1,2,\dots,11\}$, is given in
\cite{GaMiGr}. This classification is made with respect
to the (0,3)-tensor $F$ defined by
\[ 
F(x,y,z)=g\bigl( \left( \nabla_x \f \right)y,z\bigr).
\]
The tensor $F$ possess the following basic properties:
\[ 
\begin{array}{l}
F(x,y,z)=F(x,z,y)
=F(x,\f y,\f z)+\eta(y)F(x,\xi,z)
+\eta(z)F(x,y,\xi),\\[4pt]
F(x,\f y, \xi)=(\n_x\eta)y=g(\n_x\xi,y).
\end{array}
\]

The intersection of the basic classes is the special class $\F_0$,
determined by the condition $F=0$, and it is known as the
class of the \emph{cosymplectic B-metric manifolds}.

The Lee forms of $\M$ are the following 1-forms
associated with $F$:
\[ 
\theta(z)=g^{ij}F(e_i,e_j,z),\quad
\theta^*(z)=g^{ij}F(e_i,\f e_j,z), \quad \omega(z)=F(\xi,\xi,z),
\]
where $\left(g^{ij}\right)$ is the inverse matrix of the
matrix $\left(g_{ij}\right)$ of $g$ with respect to a basis $\left\{e_i;\xi\right\}$ $(i=1,2,\dots,2n)$ of
$T_pM$.  Obviously,
$\om(\xi)=\ta^*\circ\f+\ta\circ\f^2=0$ are valid.

In \cite{Man62}, it is given the following definition.
An almost contact B-metric manifold $\M$ is said to be
\emph{Einstein-like} if its Ricci tensor $\rho$ satisfies the condition
\begin{equation}\label{defEl}
\begin{array}{l}
\rho=a\,g +b\,\g +c\,\eta\otimes\eta
\end{array}
\end{equation}
for some triplet of constants $(a,b,c)$.

In particular, when $b=0$ and $b=c=0$, the manifold is called an \emph{$\eta$-Einstein manifold} and an \emph{Einstein manifold}, respectively.
If $a$, $b$, $c$ are functions on $M$, then the manifold satisfying condition \eqref{defEl} is called \emph{almost Einstein-like}, \emph{almost $\eta$-Einstein} and \emph{almost Einstein}, respectively.

As a consequence of \eqref{defEl} we obtain that the corresponding scalar curvature $\tau$ and its associated quantity $\tau^*$  on an almost Einstein-like manifold have the form:
\begin{equation}
\label{tau-El}
\tau=(2n+1)a +b +c,\\[4pt]
\end{equation}
\begin{equation*}
\label{tau*-El}
\tau^*=-2n\,b,
\end{equation*}
where $\tau^*$ is defined by $\tau^*:=g^{ij}\rho(e_i,\f e_j)$ in an arbitrary basis $\{e_i\}$, $i\in\{1,\dots,2n+1\}$ of
$T_pM$.


A vector field $\vartheta$ on a (pseudo-)Riemannian manifold $(M,g)$ is called \emph{torse-form\-ing vector field} if it
satisfies the following condition for arbitrary vector field $x\in \Gamma(TM)$ 
\begin{equation}\label{tf-v}
	\n_x \vartheta = f\,x + \gm(x)\vartheta,
\end{equation}
where $f$ is a differentiable function and $\gm$ is a 1-form \cite{Yano44}, \cite{Sch54}.
The 1-form $\gm$ is called the \emph{generating form} and
the function $f$ is called the \emph{conformal scalar} of $\vartheta$ \cite{MihMih13}.

In \cite{Man62}, the torse-forming Reeb vector field $\xi$ is studied. Then, according to \eqref{strM2}, the 1-form $\gm$ on an almost contact B-metric manifold $\M$ is  determined by $\gm=-f\,\eta$ and then we have 
\begin{equation}\label{tf}
\begin{array}{l}
		\n_x \xi=-f\,\f^2x,\qquad \left(\n_x \eta \right)(y)=-f\,g(\f x,\f y).
		\end{array}
\end{equation}

As a corollary of \eqref{tf}, it is shown that $\ta^*(\xi)=2n\,f$ and $\ta(\xi)=\om=0$. 
Then, by the components of $F$ in the basic classes $\F_i$ ($i\in\{1,2,\dots,11\}$), given in \cite{HM1}, it is deduced in \cite{Man62} that the class of
the manifolds $\M$ with torse-forming $\xi$ is
$\F_1\oplus\F_2\oplus\F_3\oplus\F_5\oplus\F_6\oplus\F_{10}$.
Among the basic classes, only $\F_5$ can contain such manifolds.

If the Reeb vector field $\xi$ on $\M\in\F_5$ is torse-forming with a conformal scalar $f$, then it is valid the following
\begin{equation}\label{tfF5}
		\left(\n_x \f \right)y=-f\{g(x,\f y)\xi+\eta(y)\f x\}.
\end{equation}
Then, for an Einstein-like manifold $\M$ with torse-forming $\xi$, 
it is known that
 the condition for Ricci-symmetry is equivalent to the Einstein condition \cite{Man62}.

\begin{remark}\label{rem:types}
Some special types of torse-forming vector fields have been considered in various studies.
A vector field $\vartheta$ determined by \eqref{tf-v} is called respectively:
\begin{description}
	\item - \emph{torqued}, if $\gm(\vartheta) = 0$;
	\item - \emph{concircular}, if $\gm = 0$;
	\item - \emph{concurrent}, if $f - 1=\gm = 0$;
	\item - \emph{recurrent}, if $f = 0$;
	\item - \emph{parallel}, if $f = \gm = 0$.
\end{description}
\end{remark}
We further exclude from our consideration the trivial case when $f = 0$, since it implies the parallelism of $\vartheta$.

Further, we consider a torse-forming vector field $\vartheta$ on $\M$, i.e.\ \eqref{tf-v} is valid.
Moreover, we suppose that $\vartheta$ is a constant multiple of $\xi$, i.e. $\vartheta=k\,\xi$,
where $k$ is a nonzero constant on $M$ and obviously
$k=\eta(\vartheta)$ holds true. Therefore, $\vartheta$ belongs to the vertical distribution $H^\bot=\Span(\xi)$, which is orthogonal to the contact distribution $H=\ker(\eta)$ with respect to $g$.
For these reasons, we call such a vector field $\vartheta$ \emph{constant-length vertical}.

\begin{theorem}\label{thm:curv-tfv}
Let a vector field $\vartheta$ on $\M$ be torse-forming with a conformal scalar $f$ and generating form $\gm$. Moreover, let $\vartheta$ be constant-length vertical with a constant $k$ of proportionality to $\xi$. Then we have:
\begin{enumerate}
	\item[$(i)$] 	$\xi$ is a geodesic vector field and $\eta$ is a closed 1-form;
	
	\item[$(ii)$]   $\xi$ is a torse-forming vector field with conformal scalar ${f}/{k}$
and generating form
$\gm=-({f}/{k}) \eta$;

	\item[$(iii)$]  the following equalities for the curvature tensor $R$, the Ricci tensor $\rho$ and the sectional curvature $K$ are valid:
\end{enumerate}
%
\begin{gather}
\label{tfv-Rxyxi}
R(x,y)\xi=- \frac{1}{k^2}\left\{\left[k\,\D{f}(x)+f^2\eta(x)\right] \f^2 y
-\left[k\,\D{f}(y)+f^2\eta(y)\right] \f^2 x\right\},
\\[4pt]
\label{tfv-Rxxixi}
R(x,\xi)\xi=\frac{1}{k^2}\left\{k\,\D{f}(\xi)+f^2\right\} \f^2 x,
\\[4pt]
\label{tfv-royxi}
\rho(y,\xi)=- \frac{1}{k^2}\left\{(2n-1)k\,\D{f}(y)+\left[k\,\D{f}(\xi)+2n\,f^2\right]\eta(y)\right\},
\\[4pt]
\label{tfv-roxixi}
\rho(\xi,\xi)=- \frac{2n}{k^2}\left\{k\,\D{f}(\xi)+f^2\right\},
\\[4pt]
\label{tfv-kxix}
K(\xi,x)=- \frac{1}{k^2}\left\{k\,\D{f}(\xi)+f^2\right\},
\end{gather}
\ie the sectional curvature of any $\xi$-section   $(\xi,x)$, $x\notin H^\bot$,
does not depend on $x$. 
\end{theorem}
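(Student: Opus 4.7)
The plan is to extract part $(ii)$ directly from the torse-forming hypothesis together with $\vartheta = k\,\xi$ and constant $k$, then deduce $(i)$ as an immediate corollary, and finally expand the Riemann tensor to obtain $(iii)$. Since $k$ is constant, \eqref{tf-v} reduces to $k\,\n_x\xi = f\,x + \gm(x)\,k\,\xi$, i.e.\ $\n_x\xi = (f/k)\,x + \gm(x)\,\xi$. Applying $\eta$ to both sides and using $\eta(\n_x\xi) = 0$ from \eqref{strM2} forces $\gm(x) = -(f/k)\,\eta(x)$; substituting back and invoking $\f^2 = -\I + \eta\otimes\xi$ yields $\n_x\xi = -(f/k)\,\f^2 x$, which is \eqref{tf} with $f$ replaced by $f/k$. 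This proves $(ii)$. Part $(i)$ is then immediate: $\n_\xi\xi = -(f/k)\,\f^2\xi = 0$ since $\f\xi = 0$, and $(\n_x\eta)y = g(\n_x\xi,y) = -(f/k)\,g(\f x,\f y)$ is symmetric in $x,y$, so $\D\eta = 0$.

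For part $(iii)$, set $h := f/k$, so that $\n_y\xi = h\,y - h\,\eta(y)\,\xi$, and expand
$R(x,y)\xi = \n_x\n_y\xi - \n_y\n_x\xi - \n_{[x,y]}\xi$
via the Leibniz rule. The differentiation produces terms of five types: $(xh)$-scalars, $h\,\n_x y$, $h(\n_x\eta)(y)\,\xi$, $h\,\eta(\n_x y)\,\xi$, and $h\,\eta(y)\,\n_x\xi$. Upon antisymmetrizing in $x,y$, the $h\,\n_x y$- and $h\,\eta(\n_x y)\,\xi$-pieces combine with $\n_{[x,y]}\xi$ and cancel identically; the $(\n\eta)$-pieces cancel because $\n\eta$ is symmetric by $(i)$; the $h\,\eta(y)\,\n_x\xi$-pieces assemble into $h^2[\eta(x)y - \eta(y)x]$ after substituting $\n_x\xi = h(x - \eta(x)\xi)$; and the $(xh)$-pieces deliver the $\D f$-contributions. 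Writing $h = f/k$, one obtains \eqref{tfv-Rxyxi}. Setting $y = \xi$ and using $\f^2\xi = 0$, $\eta(\xi) = 1$ yields \eqref{tfv-Rxxixi}.

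The Ricci identity \eqref{tfv-royxi} follows by tracing \eqref{tfv-Rxyxi} in the first slot, using $\tr(\f^2) = -2n$, the rank-one trace rule $\tr(x\mapsto A(x)v) = A(v)$, and the identities $\eta(\f^2 y) = 0$ and $\D f(\f^2 y) = -\D f(y) + \D f(\xi)\,\eta(y)$; the coefficient $(2n-1)k$ multiplying $\D f(y)$ emerges as the sum $-1 + 2n$ of the two contractions. Specializing $y = \xi$ gives \eqref{tfv-roxixi}, while \eqref{tfv-kxix} follows from the definition $K(\xi,x) = g(R(x,\xi)\xi,x)/[g(x,x) - \eta(x)^2]$ together with \eqref{tfv-Rxxixi} and $g(\f^2 x, x) = -[g(x,x) - \eta(x)^2]$, which cancels the denominator and leaves a quantity independent of $x$.

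The main obstacle is the bookkeeping in the curvature expansion: every Leibniz term must be tracked, and three distinct cancellations verified---the $\n$-pieces against $\n_{[x,y]}\xi$, the $\n\eta$-pieces by symmetry (invoking $(i)$), and the reassembly of the $\eta(y)\,\n_x\xi$-pieces together with the $(xh)$-pieces---before the compact form \eqref{tfv-Rxyxi} emerges. The appearance of the coefficient $(2n-1)$ rather than $2n$ in \eqref{tfv-royxi} provides a useful sanity check on this bookkeeping.
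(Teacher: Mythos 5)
Your proposal is correct and follows essentially the same route as the paper: derive $\gm=-(f/k)\eta$ by applying $\eta$ to $k\,\n_x\xi=f\,x+k\gm(x)\xi$, obtain $\n_x\xi=-(f/k)\f^2x$, read off $(i)$ and $(ii)$, and then compute the curvature identities from this formula. The paper leaves the curvature computation in $(iii)$ unstated, whereas you carry it out explicitly; your bookkeeping (the cancellations against $\n_{[x,y]}\xi$, the trace $\tr\f^2=-2n$, and the coefficient $(2n-1)=-1+2n$) all check out.
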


\begin{proof}
By virtue of \eqref{tf-v} and  $\vartheta=k\,\xi$, we obtain
\begin{equation}\label{dkxxi}
    k\n_x \xi = fx+k\gm(x)\xi,
\end{equation}
which after applying $\eta$ gives
\begin{equation}\label{al-tf-v}
\gm=-\frac{f}{k} \eta,
\end{equation}
Combining \eqref{dkxxi} and \eqref{al-tf-v}, we get
\begin{equation}\label{nxxi}
\n_x \xi =- \frac{f}{k} \f^2 x.
\end{equation}

Equality \eqref{nxxi}  shows that $\xi$ is a geodesic vector field and $\eta$ is a closed 1-form, \ie $(i)$ is true. Moreover, rewriting \eqref{nxxi} in the form
$\n_x \xi =(f/k) \{x- \eta(x)\xi\}$ and comparing with \eqref{tf-v}, we find that $\xi$ is torse-forming with a conformal scalar $f/k$ and the same generating form $\gm$ as for $\vartheta$ bearing in mind \eqref{al-tf-v}, 
i.e. $(ii)$ is also valid.



%


Finally, using  \eqref{nxxi}, we calculate the  curvature equalities given in
$(iii)$. 
\end{proof}

By virtue of \eqref{nxxi} and \eqref{tfF5}, we deduce that for the torse-forming vector field $\vartheta$ on $\M\in\F_5$ with a conformal scalar $f/k$ it is valid the following
\begin{equation}\label{tfvF5}
\begin{array}{l}
		\left(\n_x \f \right)y=-\frac{f}{k}\{g(x,\f y)\xi+\eta(y)\f x\}.
\end{array}
\end{equation}

Bearing in mind \eqref{tfv-Rxxixi}, we note that if the conformal scalar $f$ satisfies the equation $k\,\D f(\xi) +f^2 = 0$, then $R(x,\xi)\xi$
degenerates for arbitrary $x$.
Therefore, we give the following

\begin{definition}\label{def:reg}
Torse-forming constant-length vertical vector field $\vartheta$ on an almost contact B-met\-ric manifold $\M$
is called \emph{regular} if its generating function $f$ and 
its constant $k$ of proportionality to $\xi$ satisfy the condition
$k\,\D f(\xi) +f^2 \neq 0$.
\end{definition}

\begin{remark}\label{rem:reg=f}
Let us give an example of a regular (respectively, non-regular) torse-forming constant-length vertical vector field $\vartheta$ on $\M$.
Suppose that $f$ is a function of $t\in\R\setminus \{0\}$ determined by $f={(1+\sqrt{3})k}/({2t})$. 
Then we check that
$\vartheta$ with this conformal scalar $f$ is regular because we get 
$
kf'+f^2=k^2/(2t^2)>0.
$
Otherwise, let $f$ be determined by $f=k/t$, $t\neq 0$. Then, we find that  $kf'+f^2=0$ and therefore
$\vartheta$ with the latter conformal scalar $f$ is non-regular.
\end{remark}

\begin{remark}
Using  \eqref{al-tf-v}, we refine the definition condition \eqref{tf-v}
for the torse-forming constant-length vertical vector field $\vartheta$ in the form
\begin{equation}\label{tf-v=}
	\n_x \vartheta = -f \f^2x,
\end{equation}
which means that $\vartheta$ is concircular on $H$ and it is parallel along $H^\bot$.
Furthermore, obviously in the present case $\vartheta$ cannot be any of the other special types given in \remref{rem:types}.
\end{remark}


\section{Ricci-like solitons with torse-forming constant-length vertical potential on almost contact B-metric manifolds}


According to  \cite{Man63}, an almost contact B-metric manifold $\M$ is said to admit a
\emph{Ricci-like soliton} with potential vector field $\vartheta$ if its Ricci tensor $\rho$ satisfies the following condition for a triplet of constants $(\lm,\mu,\nu)$
\begin{equation}\label{defRl-v}
\begin{array}{l}
\frac12 \mathcal{L}_{\vartheta} g  + \rho + \lm\, g  + \mu\, \g  + \nu\, \eta\otimes \eta =0,
\end{array}
\end{equation}
where $\LL$ denotes the Lie derivative.

If $\mu=0$ (respectively, $\mu=\nu=0$), then \eqref{defRl-v} defines an \emph{$\eta$-Ricci soliton}
(respectively, a \emph{Ricci soliton}) on $\M$.

If $\lm$, $\mu$, $\nu$ are functions on $M$, then the soliton is called \emph{almost Ricci-like soliton}, \emph{almost $\eta$-Ricci soliton} and \emph{almost Ricci soliton}, respectively \cite{PiRiRiSe11}.

A Ricci soliton is called \emph{shrinking}, \emph{steady} or \emph{expanding}
depending on whether $\lm$ is negative, zero or positive, respectively \cite{ChoLuNi}.

For the case of a Ricci-like soliton with potential $\vartheta=\xi$, let us recall the following
\begin{theorem}[\cite{Man62}]\label{thm:Rltf}
Let $\xi$ on $\M$, $\dim M=2n+1$,  be torse-forming with a conformal scalar $f$.
The manifold admits
a Ricci-like soliton with potential $\xi$ and constants $(\lm,\mu,\nu)$
if and only if
the manifold is Einstein-like with constants $(a,b,c)$
provided that
$f$ is a constant and the following equalities are satisfied:
\begin{equation}\label{tfElRl-const}
a+\lm+f=0,\qquad b+\mu=0,\qquad c+\nu-f=0.
\end{equation}

In particular, we have:
\begin{enumerate}
	\item[$(i)$]    The manifold admits an $\eta$-Ricci soliton with potential $\xi$ and constants $(\lm,0,\nu)$ if and only if
the manifold is $\eta$-Einstein with constants $(a,0,c)$, where the equality  $a+c+\lm+\nu=0$ is valid.

	\item[$(ii)$]   The manifold admits a Ricci soliton with potential $\xi$ and constant $\lm$ if and only if
the manifold is $\eta$-Einstein with constants $(-\lm-f,0,f)$.

	\item[$(iii)$]   The manifold is Einstein with constant $a$ if and only if
it admits an $\eta$-Ricci with potential $\xi$ and constants $(-a-f,0,f)$.
\end{enumerate}
\end{theorem}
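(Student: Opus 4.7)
The plan is to convert the soliton equation \eqref{defRl-v} directly into an identity for the Ricci tensor and then match it against the Einstein-like condition \eqref{defEl}. All the work lies in computing $\mathcal{L}_{\xi} g$ from the torse-forming hypothesis on $\xi$.

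First, from \eqref{tf} we have $\n_x \xi = -f\,\f^2 x = f\{x-\eta(x)\xi\}$, so
\begin{equation*}
g(\n_x \xi, y) = f\,g(x,y) - f\,\eta(x)\eta(y),
\end{equation*}
which is symmetric in $x,y$. Hence
\begin{equation*}
(\LL_{\xi} g)(x,y) = g(\n_x\xi,y)+g(x,\n_y\xi) = 2f\,\bigl\{g(x,y)-\eta(x)\eta(y)\bigr\}.
\end{equation*}
Substituting this in \eqref{defRl-v} with $\vartheta=\xi$ and solving for $\rho$ yields
\begin{equation*}
\rho(x,y) = -(f+\lm)\,g(x,y) - \mu\,\g(x,y) + (f-\nu)\,\eta(x)\eta(y).
\end{equation*}

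Now I would compare this with the Einstein-like form \eqref{defEl}. At every point the tensors $g$, $\g$, $\eta\otimes\eta$ are linearly independent (this is immediate from \eqref{strM}--\eqref{strM2}: evaluate on pairs from $\ker\eta$ and on $(\xi,\xi)$), so the identification forces
\begin{equation*}
a=-(f+\lm),\qquad b=-\mu,\qquad c=f-\nu,
\end{equation*}
which are exactly the three equalities in \eqref{tfElRl-const}. Since $\lm,\mu,\nu$ are constants, the constancy of $a,b,c$ is equivalent to $f$ being constant; this explains the hypothesis that $f$ be constant and makes the equivalence precise in both directions. Conversely, starting from an Einstein-like structure with constants $(a,b,c)$ and constant $f$, one defines $\lm,\mu,\nu$ by the same relations and reads \eqref{defRl-v} off the above display.

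The three specializations $(i)$--$(iii)$ are then immediate by setting the appropriate constants to zero in \eqref{tfElRl-const}: $\mu=0$ forces $b=0$ and the sum of the remaining two equations gives $a+c+\lm+\nu=0$ for $(i)$; $\mu=\nu=0$ gives $b=0$, $c=f$, $a=-\lm-f$ for $(ii)$; and $b=c=0$ gives $\mu=0$, $\nu=f$, $\lm=-a-f$ for $(iii)$. The only potentially delicate step is the passage from a pointwise relation between $(a,b,c)$ and $(\lm,\mu,\nu,f)$ to the joint constancy statement; this is handled by the linear independence of $g,\g,\eta\otimes\eta$ noted above, after which everything else is a direct substitution.
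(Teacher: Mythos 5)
Your proposal is correct and follows essentially the same route the paper takes: the statement is recalled from \cite{Man62} without proof here, but the paper's proof of its generalization (\thmref{thm:tf_k=const}) proceeds exactly as you do, computing $\LL_{\vartheta}g=-2fg(\f\cdot,\f\cdot)$ from the torse-forming condition, substituting into \eqref{defRl-v}, and matching coefficients against \eqref{defEl}. Your explicit remark on the pointwise linear independence of $g$, $\g$, $\eta\otimes\eta$ is a welcome justification of the coefficient identification that the paper leaves implicit.
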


Now, we generalize the latter result for almost Ricci-like solitons with
torse-forming constant-length vertical potential.

\begin{theorem}\label{thm:tf_k=const}
Let $\M$ be $(2n+1)$-dimensional 
and $\vartheta$ be a vector field on $M$, which is vertical with a constant $k=\eta(\vartheta)$ as well as
torse-forming with a function $f$ as a conformal scalar.
Moreover, let $a$, $b$, $c$, $\lm$, $\mu$, $\nu$ be functions on $M$ that satisfy the following equalities:
\begin{gather}
\label{tfElRl-0}
a+\lm+f=0,\qquad b+\mu=0,\qquad c+\nu-f=0,
\\[4pt]
\label{Dfxi-tfRlaEl}
\lm+\mu+\nu=-a-b-c=\frac{2n}{k^2}\left\{k\,\D{f}(\xi)+f^2\right\}.
\end{gather}

Then, $M$ admits an almost Ricci-like soliton with potential $\vartheta$ and functions $(\lm,\mu,\A\nu)$
if and only if
$M$ is almost Einstein-like with functions $(a,b,c)$.
\end{theorem}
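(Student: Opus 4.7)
The plan is to reduce both directions of the equivalence to a coefficient comparison obtained after computing the Lie derivative $\LL_\vartheta g$ directly from the torse-forming identity \eqref{tf-v=}. Concretely, using the Levi-Civita formula $(\LL_\vartheta g)(x,y) = g(\n_x \vartheta, y) + g(x, \n_y \vartheta)$ together with \eqref{tf-v=} and the consequence $\f^2 x = -x + \eta(x)\xi$ of \eqref{strM}, I would establish
\begin{equation*}
\tfrac12\LL_\vartheta g = f\,g - f\,\eta\otimes\eta.
\end{equation*}
This is the only nontrivial calculation and it is the main care point of the proof, since all the subsequent steps are purely algebraic.

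Substituting this expression into the soliton equation \eqref{defRl-v} rewrites the soliton condition as
\begin{equation*}
\rho = -(f+\lm)\,g - \mu\,\g + (f-\nu)\,\eta\otimes\eta,
\end{equation*}
and the first group of hypotheses \eqref{tfElRl-0} identifies the three scalar coefficients on the right-hand side with $a$, $b$ and $c$, respectively. Hence the soliton equation becomes term-by-term identical to the almost Einstein-like condition $\rho = a\,g + b\,\g + c\,\eta\otimes\eta$ of \eqref{defEl}. Both implications of the equivalence therefore follow at once from the same substitution: no linear independence of $g$, $\g$, $\eta\otimes\eta$ is required, because we are not solving for coefficients but merely rewriting the same tensorial equation in two equivalent forms.

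To confirm that the second hypothesis \eqref{Dfxi-tfRlaEl} is not an extra restriction but an automatic compatibility relation, I would evaluate the Einstein-like form of $\rho$ at $(\xi,\xi)$. Using $g(\xi,\xi) = \g(\xi,\xi) = \eta(\xi)^2 = 1$, one obtains $\rho(\xi,\xi) = a + b + c$, and comparison with \eqref{tfv-roxixi} from \thmref{thm:curv-tfv} then yields $a + b + c = -\tfrac{2n}{k^2}\{k\,\D f(\xi) + f^2\}$. Independently, summing the three identities in \eqref{tfElRl-0} gives $a + b + c = -(\lm + \mu + \nu)$, and combining these two facts produces precisely \eqref{Dfxi-tfRlaEl}. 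In view of this, the second hypothesis is a tautological consequence of \thmref{thm:curv-tfv} together with \eqref{tfElRl-0}, and I do not expect any genuine obstacle in completing the argument.
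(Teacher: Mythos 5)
Your proposal is correct and follows essentially the same route as the paper: compute $\tfrac12\LL_\vartheta g$ from \eqref{tf-v=} (your form $f\,g-f\,\eta\otimes\eta$ equals the paper's $-f\,g(\f\cdot,\f\cdot)$ by \eqref{strM}), observe that under \eqref{tfElRl-0} the soliton equation \eqref{defRl-v} and the almost Einstein-like condition \eqref{defEl} become the same tensor identity, and obtain \eqref{Dfxi-tfRlaEl} by evaluating $\rho(\xi,\xi)$ and comparing with \eqref{tfv-roxixi}. No gaps.
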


\begin{proof}
For a torse-forming constant-length vertical vector field $\vartheta$ on $\M$, we have the formula in \eqref{tf-v=}.
It implies the following equality
\begin{equation}\label{Lvg}
\left(\LL_\vartheta g\right)(x,y)=-2 f g(\f x, \f y).
\end{equation}
Then, the condition for an almost Ricci-like soliton with potential $\vartheta$ given in \eqref{defRl-v} takes the form
\begin{equation}\label{ro-v}
\begin{array}{l}
\rho(x,y)=(\lm+f)g(\f x, \f y)-\mu g(x, \f y)-(\lm+\mu+\nu)\eta(x)\eta(y),
\end{array}
\end{equation}
which coincides with the condition for an almost Einstein-like manifold given in
\eqref{defEl}, assuming the conditions in \eqref{tfElRl-0}.

As consequences of  \eqref{defEl} and \eqref{ro-v} we have the following expressions, respectively:
\begin{equation}\label{roxixi-El}
\rho(\xi,\xi)=a+b+c
\end{equation}
and  
\begin{gather}
\label{roxxi-Rl}
\rho(x,\xi)=-(\lm+\mu+\nu)\eta(x),
\\[4pt]
\label{roxixi-Rl}
\rho(\xi,\xi)=-(\lm+\mu+\nu).
\end{gather}
Comparing \eqref{tfv-roxixi}, \eqref{roxixi-El} and \eqref{roxixi-Rl}, we get the dependencies in \eqref{Dfxi-tfRlaEl} between the involved functions.
\end{proof}

\begin{remark}\label{rem:reg}
Taking into account \defref{def:reg} and \eqref{Dfxi-tfRlaEl},
we conclude that the condition for regularity of $\vartheta$ is equivalent to the non-vanishing of any of the sums $a+b+c$ and $\lm+\mu+\nu$.
\end{remark}

\begin{corollary}\label{cor:3.3}
Under the hypothesis of 
\thmref{thm:tf_k=const} we have in particular:
\begin{enumerate}
	\item[$(i)$]    The manifold admits an almost $\eta$-Ricci soliton with potential $\vartheta$ if and only if
it is almost $\eta$-Einstein.

	\item[$(ii)$]    The manifold admits an almost Ricci soliton with potential $\vartheta$ and function $\lm$ if and only if
it is almost $\eta$-Einstein (which is not almost Einstein) with functions $(a,b,c)=(-\lm-f,0,f)$.

	\item[$(iii)$]   The manifold is almost Einstein with function $a$ if and only if
it admits an almost $\eta$-Ricci soliton (which is not almost Ricci soliton) with potential $\vartheta$ and functions $(\lm,\mu,\nu)=(-a-f,0,f)$.
\end{enumerate}
\end{corollary}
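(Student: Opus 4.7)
The plan is to obtain all three parts as immediate specializations of \thmref{thm:tf_k=const}, reading off what the linear system \eqref{tfElRl-0} forces once some of the six functions $(a,b,c,\lm,\mu,\nu)$ are constrained by the definitions of the special soliton/Einstein-like subtypes. No new computation should be needed beyond what the theorem already packages; the compatibility \eqref{Dfxi-tfRlaEl} takes care of itself, since it only constrains the sum $a+b+c$ (equivalently $\lm+\mu+\nu$), and none of the specializations in $(i)$--$(iii)$ alters this sum.

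For $(i)$, I would impose $\mu=0$ on the soliton side. The middle equation $b+\mu=0$ of \eqref{tfElRl-0} then yields $b=0$, i.e.\ the manifold is almost $\eta$-Einstein; reversing the implication starts from $b=0$ and recovers $\mu=0$ the same way. For $(ii)$, I would set $\mu=\nu=0$ and read the three equations of \eqref{tfElRl-0} in order: they give $b=0$, $c=f$, and $a=-\lm-f$, which is the almost $\eta$-Einstein data claimed. The converse direction runs the same chain of substitutions backwards. For $(iii)$, I would set $b=c=0$ and obtain $\mu=0$, $\nu=f$, $\lm=-a-f$, which is the announced almost $\eta$-Ricci soliton; the converse is again symmetric.

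The only subtle point, and the step that merits an explicit word, is the parenthetical exclusion clauses in $(ii)$ and $(iii)$. In $(ii)$ the resulting manifold is required to be \emph{not} almost Einstein, i.e.\ $c\neq 0$; but we have read off $c=f$, and the standing hypothesis $f\neq 0$, imposed at the end of Section~2 to rule out the trivial parallel case, guarantees exactly this. Symmetrically, in $(iii)$ the soliton is required not to be an almost Ricci soliton, i.e.\ $\nu\neq 0$, and once again $\nu=f\neq 0$ closes the gap. Thus the only ``obstacle'' is this bookkeeping on the nonvanishing of $f$, and the rest is a direct specialization of \thmref{thm:tf_k=const}.
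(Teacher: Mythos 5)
Your proposal is correct and matches the paper's intent: the paper gives no separate proof of \cororef{cor:3.3}, treating it exactly as you do, namely as an immediate specialization of the linear relations \eqref{tfElRl-0} in \thmref{thm:tf_k=const}, with the parenthetical exclusions in $(ii)$ and $(iii)$ following from the standing assumption $f\neq 0$. Your explicit note that \eqref{Dfxi-tfRlaEl} only constrains the sums and is therefore unaffected is a useful clarification, but the route is the same.
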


\begin{corollary}\label{cor:3.5}
Under the hypothesis of 
\thmref{thm:tf_k=const} we have that
$M$ is almost Einstein-like with functions $(a,b,c)$ and scalar curvature $\tau$. Then the conformal scalar $f$ satisfies the following equation
\begin{equation}\label{ddtf}
kf'+f^2
=k^2\left(a-\frac{\tau}{2n}\right),
\end{equation}
where $f'$ is
the derivative of the function $f=f(t)$ and $t$ is a coordinate on $H^\bot$ and
the sectional curvature of an arbitrary $\xi$-section is
\begin{equation}\label{Kxi}
K_\xi=\frac{\tau}{2n}-a.
\end{equation}
\end{corollary}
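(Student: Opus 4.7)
The plan is a short algebraic combination of two ingredients already in hand: the identities produced in \thmref{thm:tf_k=const} (which give control over $k\,\D f(\xi)+f^2$) and the curvature formulas of \thmref{thm:curv-tfv}. The hypothesis is that we are simultaneously under the equivalence of \thmref{thm:tf_k=const}, so the manifold is almost Einstein-like with functions $(a,b,c)$, and in particular the scalar-curvature identity \eqref{tau-El} applies.

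First, I would read off from the second chain of equalities in \eqref{Dfxi-tfRlaEl} the relation
$$-(a+b+c)=\frac{2n}{k^2}\bigl\{k\,\D f(\xi)+f^2\bigr\}.$$
Next, \eqref{tau-El} gives $\tau=(2n+1)a+b+c$, hence $a+b+c=\tau-2n\,a$. Substituting this into the previous display and clearing the factor $2n/k^2$ yields
$$k\,\D f(\xi)+f^2=k^{2}\left(a-\frac{\tau}{2n}\right).$$
To reach the form \eqref{ddtf}, I would note that the potential $\vartheta=k\xi$ is constant-length vertical, so the integral curves of the unit field $\xi$ foliate the one-dimensional distribution $H^\bot$; choosing a coordinate $t$ along $H^\bot$ with $\xi=\partial/\partial t$ identifies $\D f(\xi)$ with $f'=f'(t)$, giving \eqref{ddtf} exactly.

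For the sectional-curvature assertion \eqref{Kxi}, the plan is simply to insert the displayed identity into the formula \eqref{tfv-kxix} obtained in \thmref{thm:curv-tfv}:
$$K(\xi,x)=-\frac{1}{k^{2}}\bigl\{k\,\D f(\xi)+f^{2}\bigr\}=-\left(a-\frac{\tau}{2n}\right)=\frac{\tau}{2n}-a.$$
Since \thmref{thm:curv-tfv} already records that this value is independent of the choice of $x\notin H^\bot$, it legitimately defines the common sectional curvature $K_\xi$ of any $\xi$-section, and \eqref{Kxi} follows.

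There is no genuine obstacle here; the step that requires the most care is bookkeeping—pairing the right-hand side of \eqref{Dfxi-tfRlaEl} (which arose by equating \eqref{tfv-roxixi} with \eqref{roxixi-El}) against \eqref{tau-El} to eliminate $b+c$, rather than trying to re-derive the curvature content from \eqref{nxxi}. Once that pairing is done, both claims fall out by a single substitution.
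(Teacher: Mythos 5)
Your algebra for the two displayed identities is correct and follows the same route as the paper's final step: from \eqref{Dfxi-tfRlaEl} one has $-(a+b+c)=\frac{2n}{k^2}\{k\,\D f(\xi)+f^2\}$, and \eqref{tau-El} gives $a+b+c=\tau-2n\,a$, whence $k\,\D f(\xi)+f^2=k^2\bigl(a-\frac{\tau}{2n}\bigr)$; substituting into \eqref{tfv-kxix} then yields \eqref{Kxi}.

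However, there is a gap in the passage from $\D f(\xi)$ to $f'$. Writing $f=f(t)$ with $t$ a coordinate on $H^\bot$, as the corollary asserts, presupposes that $f$ is constant on the contact distribution $H$, i.e.\ that $\grad f\in H^\bot$. You simply declare this identification, but it is not automatic: a priori $f$ could vary in horizontal directions, and then $\D f(\xi)$ would only be a partial derivative, not the derivative of a one-variable function. The paper closes this by comparing \eqref{tfv-royxi} with \eqref{roxxi-Rl}, which gives
\[
(2n-1)k\,\D{f}(x)=\left\{k^2(\lm+\mu+\nu)-\left[k\,\D{f}(\xi)+2n f^2\right]\right\}\eta(x),
\]
so that $\D f(x)$ is proportional to $\eta(x)$ and hence $f$ is constant on $H$. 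You should supply this step (or an equivalent one) before identifying $\D f(\xi)$ with $f'$; with it added, your proof is complete and coincides in substance with the paper's.
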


\begin{proof}
Comparing \eqref{tfv-royxi} and \eqref{roxxi-Rl}, we obtain the following equality
\[
(2n-1)k\,\D{f}(x)=\left\{k^2(\lm+\mu+\nu)-\left[k\,\D{f}(\xi)+2n f^2\right]\right\}\eta(x),
\]
which implies that the function $f$ is constant on $H$ and its derivative on $H^\bot$ is determined by
\begin{equation}\label{Dfxi-Rl}
\D{f}(\xi)=-\frac{1}{k}f^2+\frac{k}{2n}(\lm+\mu+\nu)
\end{equation}
and then the following formula holds true
\begin{equation*}\label{Dfx-Rl}
\D{f}(x)=-\left[\frac{1}{k}f^2-\frac{k}{2n}(\lm+\mu+\nu)\right]\eta(x),
\end{equation*}
\ie the gradient of $f$ is vertical, $\grad{f}\in H^\bot$.
Therefore, due to \eqref{tau-El}, \eqref{Dfxi-tfRlaEl} and \eqref{Dfxi-Rl}, we obtain the expression in \eqref{ddtf}.
The equality in \eqref{Kxi} follows from \eqref{tfv-kxix} and \eqref{ddtf}.
\end{proof}


\section{Parallel symmetric second-order tensor on almost contact B-metric manifold with torse-forming
constant-length vertical vector field}

On an almost contact B-metric manifold $\M$ with torse-forming con\-stant-length vertical vector field $\vartheta$,
we use the curvature equalities \eqref{tfv-Rxyxi}, \eqref{tfv-Rxxixi}, \eqref{tfv-royxi} and \eqref{tfv-roxixi}.



\begin{theorem}\label{thm:h-tf}
On an almost contact B-metric manifold with regular torse-forming constant-length vertical vector field $\vartheta$, every symmetric
second-order covariant tensor that is parallel with respect to the Levi-Civita connection of the B-metric,
is a constant multiple of this metric.
\end{theorem}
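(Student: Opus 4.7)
The plan is to exploit the standard consequence of the parallelism condition $\n h=0$, namely the commutator identity
\[
h(R(x,y)z,w)+h(z,R(x,y)w)=0,
\]
applied to the explicit expressions \eqref{tfv-Rxyxi} and \eqref{tfv-Rxxixi} for the curvature tensor on a vertical direction. Regularity of $\vartheta$, i.e.\ $k\,\D f(\xi)+f^2\neq 0$, will be the crucial hypothesis that lets us divide out and pass from curvature identities to pointwise identities on $h$.

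First I would set $z=w=\xi$ to get $h(R(x,y)\xi,\xi)=0$, substitute \eqref{tfv-Rxyxi}, and then specialize $y=\xi$; since $\f^2\xi=0$, this collapses to
\[
[k\,\D f(\xi)+f^2]\,h(\f^2 x,\xi)=0,
\]
so by regularity $h(\f^2 x,\xi)=0$, which means $h(x,\xi)=\eta(x)\,h(\xi,\xi)$. Next I would show that the scalar $h(\xi,\xi)$ is constant: from $\n h=0$ and the identity $\n_x\xi=-(f/k)\f^2 x$ recorded in \eqref{nxxi}, one has $x(h(\xi,\xi))=2h(\n_x\xi,\xi)=-(2f/k)h(\f^2 x,\xi)=0$. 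Hence there is a constant $c_0$ with $h(x,\xi)=c_0\,\eta(x)$.

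Second, I would return to the commutator identity and set only $w=\xi$, obtaining $h(R(x,y)z,\xi)+h(z,R(x,y)\xi)=0$. Using the result of the previous step together with the skew-symmetry $g(R(x,y)z,\xi)=-g(R(x,y)\xi,z)$ of the Riemann tensor, this rewrites as
\[
h(z,R(x,y)\xi)=c_0\,g(z,R(x,y)\xi)
\]
for all $x,y,z$. The final step is to observe that the image $\{R(x,y)\xi\}$ is rich enough: taking $x=\xi$ in \eqref{tfv-Rxyxi} (equivalently using \eqref{tfv-Rxxixi}) yields $R(\xi,y)\xi=-\frac{1}{k^2}[k\,\D f(\xi)+f^2]\f^2 y$, whose range under the regularity assumption is exactly $H=\ker(\eta)$. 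Combined with the already-established formula on the $\xi$-direction, this forces $h(z,w)=c_0\,g(z,w)$ for every $w$, proving $h=c_0\,g$.

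The routine part is the curvature-tensor algebra. The conceptual heart — and the step where one must be careful — is the dual use of regularity: first to promote the single identity $h(R(x,y)\xi,\xi)=0$ into the strong conclusion $h(\cdot,\xi)=c_0\,\eta$, and second to ensure that $R(\xi,\cdot)\xi$ actually surjects onto the horizontal distribution. Without regularity both steps would only give an identity on a proper subspace, so the implication $h=c_0\,g$ could fail; that is precisely why \defref{def:reg} is imposed in the hypothesis.
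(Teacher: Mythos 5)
Your proposal is correct, and its first half coincides exactly with the paper's argument: from $\n h=0$ and the Ricci identity you derive $h(R(x,y)\xi,\xi)=0$, substitute \eqref{tfv-Rxyxi}, set $y=\xi$, and use regularity to obtain $h(x,\xi)=h(\xi,\xi)\eta(x)$; the constancy of $h(\xi,\xi)$ then follows from parallelism and \eqref{nxxi} just as in the paper. Where you diverge is the final extension from the $\xi$-slot to the full tensor. The paper takes the covariant derivative of the relation $h(x,\xi)=h(\xi,\xi)\eta(x)$ in an arbitrary direction $y$ and uses $\n_y\xi=-(f/k)\f^2 y$ to convert it into $h(x,\f^2 y)=h(\xi,\xi)g(x,\f^2 y)$, which requires cancelling the factor $f/k$ (legitimate since the trivial case $f=0$ was excluded earlier). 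You instead return to the commutator identity with $w=\xi$, use the skew-symmetry of $R$ in its last two arguments to get $h(z,R(x,y)\xi)=h(\xi,\xi)\,g(z,R(x,y)\xi)$, and then invoke the surjectivity of $y\mapsto R(\xi,y)\xi$ onto $H=\ker(\eta)$, which is guaranteed precisely by the regularity condition $k\,\D f(\xi)+f^2\neq 0$ via \eqref{tfv-Rxxixi}. Both routes are valid; yours has the mild advantage of resting only on the regularity hypothesis (it never divides by $f$, so it is insensitive to possible isolated zeros of the conformal scalar), while the paper's differentiation argument is shorter and avoids a second pass through the curvature identity.
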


\begin{proof}
Let $h$ be a symmetric
$(0,2)$-tensor field which is parallel, i.e. $\n h=0$ with respect to the Levi-Civita connection of $g$.
Then, because of \eqref{tfv-Rxyxi} and the Ricci identity for $h$, the following property is valid
\begin{equation}\label{hxi}
	h\left(R(x,y)\xi,\,\xi\right)=0.
\end{equation}

Using \eqref{tfv-Rxyxi} and \eqref{hxi}, 
we obtain
\[ 
\begin{array}{l}
	 \left[k\,\D{f}(x)+f^2\eta(x)\right] h(\f^2 y,\xi)
-\left[k\,\D{f}(y)+f^2\eta(y)\right] h(\f^2 x,\xi)=0.
\end{array}
\]
The latter equality, by substitution $y$ for $\xi$ and recalling \eqref{strM}, provides the following
\[ 
\begin{array}{l}
		\{ k\,\D f(\xi) +f^2\}\{ h(x,\xi) - h(\xi,\xi)\eta(x)\} = 0.
\end{array}
\]

Since $\vartheta$ is regular, then the latter equality gives the following property of $h$
\begin{equation}\label{tf-h}
h(x,\xi) = h(\xi,\xi)\eta(x).
\end{equation}
Then,
the claim that $h(\xi,\xi)$ is a constant follows from the latter property, the parallelism of $h$ and \eqref{nxxi}.

The covariant derivative of \eqref{tf-h} with respect to $y$ implies the following equality
due to \eqref{strM} and \eqref{nxxi} 
\[ 
h(x,y) = h(\xi,\xi)g(x,y).
\]
Therefore, the statement is true.
\end{proof}

Now, we apply \thmref{thm:h-tf} to Ricci-like solitons and we deduce the following

\begin{theorem}\label{thm:h-tf_R-l}
Let $\M$ be a $(2n+1)$-dimensional almost contact B-metric manifold with regular torse-forming constant-length vertical vector field $\vartheta$ for a conformal scalar $f$ and a constant $k$ of proportionality to $\xi$. Additionally, let $h$ be determined by the following way
$h:=\frac12 \LL_\vartheta g+\rho + \mu\, \g  + \nu\, \eta\otimes \eta$, where $\mu$ and $\nu$ are differentiable functions on $M$.
Then the following conditions are equivalent:
\begin{enumerate}
	\item[$(i)$]    $h$ is parallel with respect to $\n$ of $g$;

	\item[$(ii)$]   $M$ admits an almost Ricci-like soliton with potential $\vartheta$ and functions $(\lm,\mu,\A\nu)$, where $\lm=\mathrm{const}$, so that the following condition is satisfied
\end{enumerate}
\begin{equation}\label{tf-Rl-lmn}
\lm+\mu+\nu=\frac{2n}{k^2}\left\{k\,\D{f}(\xi)+f^2\right\}.
\end{equation}
\begin{enumerate}
	\item[$(iii)$]   $M$ is almost Einstein-like with functions $(a,b,c)$, so the following condition is met
\[ 
a+b+c=-\frac{2n}{k^2}\left\{k\,\D{f}(\xi)+f^2\right\}.
\]
\end{enumerate}
\end{theorem}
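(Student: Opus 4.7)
The guiding observation is that the tensor $h$ coincides with $-\lambda g$ precisely when the almost Ricci-like soliton equation \eqref{defRl-v} holds: rearranging \eqref{defRl-v} yields $\frac12 \LL_\vartheta g + \rho + \mu \g + \nu\,\eta\otimes\eta = -\lambda g$, i.e.\ $h = -\lambda g$. Once this identification is in hand, the equivalence (i)~$\Leftrightarrow$~(ii) reduces to \thmref{thm:h-tf}, while (ii)~$\Leftrightarrow$~(iii) is a direct application of \thmref{thm:tf_k=const}.

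For (i)~$\Rightarrow$~(ii), I would first remark that $h$ is symmetric, as a sum of symmetric tensors ($\LL_\vartheta g$, $\rho$, $\g$, $\eta\otimes\eta$), and parallel by hypothesis. Since $\vartheta$ is regular, \thmref{thm:h-tf} forces $h = Cg$ for some (global) constant $C$; setting $\lambda := -C$ then recovers the soliton equation \eqref{defRl-v} with $\lambda$ constant. The required identity \eqref{tf-Rl-lmn} is obtained by evaluating the soliton equation at $(\xi,\xi)$: using $g(\xi,\xi) = \g(\xi,\xi) = \eta(\xi)^2 = 1$, the vanishing $(\LL_\vartheta g)(\xi,\xi) = 0$ (immediate from \eqref{Lvg} since $\f\xi = 0$), and $\rho(\xi,\xi)$ from \eqref{tfv-roxixi}. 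The converse (ii)~$\Rightarrow$~(i) is trivial: (ii) rearranges to $h = -\lambda g$ with $\lambda$ constant, and $\n g = 0$ gives $\n h = 0$.

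The equivalence (ii)~$\Leftrightarrow$~(iii) is then just \thmref{thm:tf_k=const}: the relations \eqref{tfElRl-0} set up a bijective correspondence between the triples $(\lambda,\mu,\nu)$ and $(a,b,c)$, and summing those three scalar equations produces $a+b+c = -(\lambda+\mu+\nu)$, which translates \eqref{tf-Rl-lmn} verbatim into the constraint stated in (iii). The only real subtlety in the whole argument is ensuring that the scalar delivered by \thmref{thm:h-tf} is genuinely a constant on $M$ rather than merely pointwise---this is exactly what that theorem provides under the regularity of $\vartheta$; without regularity the conclusion $\lambda = \const$ would fail, and both implications to (i) would collapse.
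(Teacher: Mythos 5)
Your proposal is correct and follows essentially the same route as the paper: identify the soliton condition with $h=-\lm g$, apply \thmref{thm:h-tf} (under regularity) to get $h$ as a constant multiple of $g$ for the implication $(i)\Rightarrow(ii)$, obtain \eqref{tf-Rl-lmn} by evaluating at $(\xi,\xi)$ via \eqref{Lvg} and \eqref{tfv-roxixi}, and reduce $(ii)\Leftrightarrow(iii)$ to \thmref{thm:tf_k=const}. The only cosmetic difference is that you evaluate the soliton equation at $(\xi,\xi)$ directly, while the paper first rewrites $h=\rho+f\,g+\mu\,\g+(\nu-f)\,\eta\otimes\eta$ and then computes $h(\xi,\xi)$ --- the same calculation.
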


\begin{proof}
When $\vartheta$ is torse-forming, we have \eqref{Lvg} and hence the following expression
\[ 
h=\rho + f\,g +\mu\, \g  + (\nu-f) \eta\otimes \eta,
\]
which due to \eqref{tfv-roxixi} gives
\begin{equation}\label{h-tf-Rl-xixi}
	h(\xi,\xi)=
	- \frac{2n}{k^2}\left\{k\,\D{f}(\xi)+f^2\right\} + \mu + \nu.
\end{equation}

The condition that $M$ admits an almost Ricci-like soliton with potential $\vartheta$ is equivalent to $h=-\lm\,g$,
taking into account \eqref{defRl-v}. The latter expression of $h$ by $g$ implies $h(\xi,\xi)=-\lm$, which together with \eqref{h-tf-Rl-xixi} yields
\eqref{tf-Rl-lmn}. Moreover, $\lm=-h(\xi,\xi)$ means that $\lm$ is a constant, which completes the proof of the equivalence of  $(i)$ and $(ii)$.

Bearing in mind \thmref{thm:tf_k=const}, we deduce that the assertions  $(ii)$ and $(iii)$ are equivalent under the given conditions.
\end{proof}

\begin{corollary}\label{cor:h-tf_RlEl}
Let $\M$ be Einstein-like with regular torse-forming $\xi$ for a conformal scalar $f$ and let $h$ be determined by
$h:=\frac12\left(\LL_\xi g\right)+\rho + \mu\, \g  + \nu\, \eta\otimes \eta$, where $\mu,\nu\in\R$.
Then $h$ is parallel with respect to $\n$ of $g$ if and only if $M$ admits a Ricci-like soliton with potential $\xi$ and constants $(\lm,\mu,\nu)$, satisfying \eqref{tfElRl-const} for the constant  $f$ and
\begin{equation}\label{h-tf-RlEl-lmn}
\lm+\mu+\nu=2n\,f^2>0.
\end{equation}
\end{corollary}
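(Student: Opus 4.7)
The plan is to derive this corollary as the specialization of \thmref{thm:h-tf_R-l} in which the potential is the Reeb vector field itself, i.e.\ $\vartheta=\xi$ and $k=\eta(\vartheta)=1$, and in which the Einstein-like/Ricci-like data are genuine constants rather than functions. With $k=1$ the tensor
\[
h=\tfrac12\left(\LL_\xi g\right)+\rho+\mu\,\g+\nu\,\eta\otimes\eta
\]
of the corollary is precisely the tensor $h$ of \thmref{thm:h-tf_R-l}; the torse-forming condition on $\xi$ reads $\n_x\xi=-f\f^2x$, and regularity of $\xi$ means $\D f(\xi)+f^2\neq 0$.

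First I would invoke the equivalence $(i)\Leftrightarrow(iii)$ of \thmref{thm:h-tf_R-l} to translate parallelism of $h$ into $M$ being almost Einstein-like with functions $(a,b,c)$ satisfying $a+b+c=-2n\{\D f(\xi)+f^2\}$. Because the hypothesis of the corollary assumes $M$ is Einstein-like in the strict sense, the functions $a,b,c$ are in fact constants. Then the equivalence $(ii)\Leftrightarrow(iii)$ of \thmref{thm:h-tf_R-l}, combined with the relations \eqref{tfElRl-0} (which here specialize to \eqref{tfElRl-const}), produces an almost Ricci-like soliton with $\lm=-a-f$, $\mu=-b$, $\nu=f-c$. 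Since $\lm$ must be constant (as guaranteed by clause $(ii)$ of \thmref{thm:h-tf_R-l}) and $a$ is constant, the identity $\lm+a+f=0$ forces $f$ to be constant as well; in turn $\mu=-b$ and $\nu=f-c$ are constants, so the soliton is genuine, not merely almost.

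Having established that $f$ is constant, $\D f(\xi)=0$, and the normalization condition \eqref{tf-Rl-lmn} with $k=1$ collapses to
\[
\lm+\mu+\nu=2n\,f^2.
\]
The regularity hypothesis $\D f(\xi)+f^2\neq 0$ then reduces to $f^2\neq 0$, yielding the strict inequality $2nf^2>0$ asserted in \eqref{h-tf-RlEl-lmn}. Conversely, starting from a Ricci-like soliton with constants $(\lm,\mu,\nu)$ satisfying \eqref{tfElRl-const} and \eqref{h-tf-RlEl-lmn}, the same equivalences of \thmref{thm:h-tf_R-l} applied backwards produce parallelism of $h$.

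I do not anticipate a real obstacle here, since the statement is essentially a restriction of the previous theorem. The only point requiring a little care is the transition from functions to constants: one must verify that once $(a,b,c)$ are constants, the identity $a+\lm+f=0$ with $\lm$ constant forces $f$ to be constant, which is what makes $\D f(\xi)$ vanish and turns the regularity condition into $f^2>0$.
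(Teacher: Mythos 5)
Your proposal is correct and follows essentially the same route as the paper: both specialize \thmref{thm:h-tf_R-l} to $k=1$, reduce \eqref{tf-Rl-lmn} to $\lm+\mu+\nu=2n\,f^2$ once $f$ is constant, and use regularity to get the strict inequality. The only cosmetic difference is that you derive the constancy of $f$ from $a+\lm+f=0$ with $a$ and $\lm$ constant, whereas the paper simply appeals to \thmref{thm:Rltf}; both are adequate.
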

\begin{proof}
The statement follows from \thmref{thm:h-tf_R-l} for $k=1$ and constants $(\lm,\mu,\nu)$, $(a,b,c)$
as well as $f=\mathrm{const}$, according to \thmref{thm:Rltf}.
Hence, \eqref{h-tf-RlEl-lmn} is also valid.
\end{proof}

\begin{remark}
\cororef{cor:h-tf_RlEl} can be specialized.
On the one hand, if $b=\mu=0$ holds, then
it follows that the Einstein-like condition is reduced to the $\eta$-Einstein condition
if and only if
the notion of a Ricci-like soliton is reduced to the notion of an $\eta$-Ricci soliton.
On the other hand, if suppose $\mu=\nu=0$, then the second-order tensor $h:=\frac12\left(\LL_\xi g\right)+\rho$ is parallel if and only if
the manifold admits an expanding Ricci soliton with potential $\xi$ and constant $\lm=2n\,f^2$.
\end{remark}

\section{Example}

Bearing in mind Example 1 in \cite{GaMiGr}, let us consider the following cosymplectic B-metric manifold $(M,\f,\A\xi,\A\eta,g)$, where
\[
\begin{array}{c}
M\subset \R^{2n+1}=\left\{\left(x^1,x^2,\dots,x^n;x^{n+1},x^{n+2},\dots,x^{2n};t\right)\right\}, \\[4pt]
x^{n+i}\neq 0,\quad i\in\{1,2,\dots,n\}, \qquad t\neq 0, \\[4pt]
\f\left(\frac{\partial}{\partial x^i}\right)=\left(\frac{\partial}{\partial x^{n+i}}\right),\qquad
\f\left(\frac{\partial}{\partial x^{n+i}}\right)=-\left(\frac{\partial}{\partial x^i}\right),\qquad
\f\left(\frac{\partial}{\partial t}\right)=0,\\[4pt]
\xi=\frac{\partial}{\partial t},\qquad \eta=\D{t}, \\[4pt]
g(z,z)=-\delta_{ij}\left(z^i z^j-z^{n+i} z^{n+j}\right)+\left(z^{2n+1}\right)^2,\\[4pt]
\g(z,z)=\delta_{ij}\left(z^i z^{n+j} + z^{n+i} z^j\right)+\left(z^{2n+1}\right)^2
\end{array}
\]
for $z=z^i\frac{\partial}{\partial x^i}+z^{n+i}\frac{\partial}{\partial x^{n+i}}+z^{2n+1}\frac{\partial}{\partial t}$,
$i\in\{1,2,\dots,n\}$ and $\delta_{ij}$ are the Kronecker's symbols.

Next, we apply a contact conformal transformation of $g$
and obtain a B-met\-ric $\bar{g}$ as follows
\[
\bar{g}=e^{2u}\cos{2v}\, g + e^{2u}\sin{2v}\, \g +\left(1-e^{2u}\cos{2v} - e^{2u}\sin{2v}\right)\eta\otimes\eta,
\]
determined by functions
\[
\begin{split}
u&=\frac12\sum_{i=1}^n\left\{\ln\left[\left(x^i\right)^2+\left(x^{n+i}\right)^2\right]\right\}+\ell(t)
,\\[4pt]
v&=\sum_{i=1}^n \arctan\frac{x^i}{x^{n+i}},
\end{split}
\]
where $\ell(t)$ is an arbitrary twice differentiable function on $\R$ such that $\ell'\neq 0$. 
Then, it is proved in \cite{ManGri93} that
$(M,\f,\xi,\eta,\bar{g})$ belongs to the subclass of $\F_5$ defined by the additional condition $\theta^*$ to be closed. 
Moreover, $(M,\f,\xi,\eta,\bar{g})$ does not belong to $\F_0$, since
$\theta^*=2n\D{u}(\xi)\eta$ and $\D{u}(\xi)=\ell'(t)\neq 0$. Therefore, we have
\begin{equation}\label{th*}
\theta^*(\xi)=2n\,\ell'.
\end{equation}
Furthermore, according to Theorem 3.1 and Theorem 5.4 in \cite{ManGri94}, 
we obtain that 
the curvature tensor of $\bar{g}$ has the following form
\begin{equation}\label{R-ex}
\begin{array}{l}
R(x,y,z,w)=-\ell''\{\eta(y)\eta(z)g(x,w)-\eta(x)\eta(z)g(y,w)\\[4pt]
\phantom{R(x,y,z,w)=-\ell''\{}
+\eta(x)\eta(w)g(y,z)-\eta(y)\eta(w)g(x,z)\}\\[4pt]
\phantom{R(x,y,z,w)=}
-(\ell')^2\{g(y,z)g(x,w)-g(x,z)g(y,w)\}.
\end{array}
\end{equation}
 Therefore, the Ricci tensor, the scalar curvature and its associated quantity of $(M,\f,\xi,\eta,\bar{g})$ have the following form
\begin{equation}\label{ro-ex}
\begin{array}{l}
\rho=-\left[\ell''+2n\left(\ell'\right)^2\right]g-(2n-1)\ell''\,\eta\otimes\eta,\\[4pt]
\tau=-4n\,\ell''-2n(2n+1)\left(\ell'\right)^2,\qquad \tau^*=0.
\end{array}
\end{equation}
Hence, the obtained manifold is almost Einstein-like with functions
\begin{equation}\label{abcell}
a=-\left[\ell''+2n\left(\ell'\right)^2\right],\qquad b=0,\qquad c=-(2n-1)\ell''
\end{equation}
or rather it is almost $\eta$-Einstein. 

\begin{remark}\label{rem:ell}
In particular, $(M,\f,\xi,\eta,\bar{g})$ can be almost Ein\-stein, if and only if $\ell$ is a linear function of $t$, which implies that the obtained manifold is just Einstein, or more precisely, a hyperbolic space form. 
\end{remark}

Let us consider a torse-forming constant-length vertical vector field $\vartheta$ with a conformal scalar $f$ and a constant $k$ of proportionality to $\xi$. Then, $\vartheta$ is defined by \eqref{tf-v=}.
According to \eqref{tfvF5} and
\eqref{th*}, we get
\begin{equation}\label{ell}
\ell'=\frac{f}{k}.
\end{equation}

Obviously, \eqref{R-ex}, \eqref{ro-ex} and \eqref{ell} are in accordance with
the equalities in part $(iii)$ of \thmref{thm:curv-tfv}.

Equality \eqref{ell} converts \eqref{abcell} in the following form
\begin{equation}\label{abc-ex}
a=-\frac{1}{k^2}\left[k\,f'+2n\,f^2\right],\qquad b=0,\qquad c=\frac{1-2n}{k}f'
\end{equation}
and the scalar curvature is
\begin{equation}\label{tau-ex}
\tau=-\frac{2n}{k^2}\left[2k\,f'+(2n+1)f^2\right].
\end{equation}
Equalities \eqref{abc-ex} and \eqref{tau-ex} support \cororef{cor:3.5}.

Since \eqref{tfElRl-0} and \eqref{Dfxi-tfRlaEl} are satisfied for \eqref{abc-ex}  and
\begin{equation}\label{lmn-ex}
\lm=\frac{1}{k^2}\left[k\,f'+2n\,f^2\right]-f,\qquad \mu=0,\qquad \nu=\frac{2n-1}{k}f'+f,
\end{equation}
according to \thmref{thm:tf_k=const}, we get that $(M,\f,\xi,\eta,\bar{g})$ admits an almost Ricci-like soliton with potential $\vartheta$ and functions $(\lm,\mu,\nu)$ from \eqref{lmn-ex}.
Furthermore, it supports \cororef{cor:3.3}, case $(i)$.

The case $(ii)$ of \cororef{cor:3.3} can be illustrated by the constructed example $(M,\f,\xi,\eta,\bar{g})$ for $f=q\exp\left(-\frac{kt}{2n-1}\right)$, $q=\const\neq 0$. It is easy to check that the corresponding torse-forming vector field $\vartheta$ is regular. 

In particular case of \remref{rem:ell}, the obtained manifold is an example of case $(iii)$ in \cororef{cor:3.3}, when the functions $a$, $\lm$, $\mu$ and $f$ are constants. Then, the corresponding torse-forming vector field $\vartheta$ is regular because $\ell'\neq 0$ and $f\neq 0$.

Let us consider the torse-forming potential $\vartheta$ with a conformal scalar 
$f={(1+\sqrt{3})k}/({2t})$. 
In this case, $\vartheta$ is regular (as noted in \remref{rem:reg=f}) and  
the manifold $(M,\f,\xi,\eta,\bar{g})$ is almost $\eta$-Einstein with functions
\[
a=-\frac{(1+\sqrt{3})(2n-1)+2n}{2t^2},\qquad b=0,\qquad c=\frac{(1+\sqrt{3})(2n-1)}{2t^2}
\]
and it admits an almost $\eta$-Ricci soliton with potential $\vartheta$ and functions
\[
\lm=-\frac{(1+\sqrt{3})(kt-2n+1)-2n}{2t^2},\qquad \mu=0,\qquad \nu=\frac{(1+\sqrt{3})(kt-2n+1)}{2t^2}.
\]

\section*{Acknowledgements}
The author was supported by projects MU21-FMI-008 and FP21-FMI-002 of the Scientific Research Fund,
University of Plovdiv Paisii Hilendarski, Bulgaria.

%
\section*{Conflict of interest}

The author declares that he has no conflict of interest.


\end{document}